\newtheorem{thm}{Theorem}
\newtheorem{lem}{Lemma}[section]
\newtheorem{prop}[lem]{Proposition}
\newtheorem{cor}[lem]{Corollary}
\newtheorem{rmk}[lem]{Remark}
\newcommand{\RR}{\mathbb{R}}
\newcommand{\NN}{\mathbb{N}}
\newcommand{\be}{\begin{equation}}
\newcommand{\ee}{\end{equation}}
\newcommand{\bij}[1]{\sigma_#1:A_#1\ra\sn{r}\bs\{i(#1)\}}
\newcommand{\ra}{\rightarrow}
\newcommand{\s}{\subset}
\newcommand{\si}{\sigma}
\newcommand{\bs}{\backslash}
\newcommand{\lil}[2]{#1_1,\ldots,#1_#2}
\newcommand{\lu}[2]{#1^1,\ldots,#1^#2}
\newcommand{\suml}[2]{#1_1+\cdots+#1_#2}
\newcommand{\sn}[1]{\{1,\ldots,#1\}}
\begin{document}

\title{On Tverberg partitions}
\author{Moshe J. White \thanks{Research supported by ERC advanced grant 320924} \\
Institute of Mathematics,\\
Hebrew University, Jerusalem, Israel}
%\email{moshe.white@mail.huji.ac.il}
\date{}
\maketitle

\begin{abstract}
A theorem of Tverberg from 1966 asserts that every set $X\s\RR^d$ of $n=T(d,r)=(d+1)(r-1)+1$ points can be partitioned into $r$ pairwise disjoint subsets, whose convex hulls have a point in common. Thus every such partition induces an integer partition of $n$ into $r$ parts (that is, $r$ integers $\lil{a}{r}$ satisfying $n=\suml{a}{r}$), in which the parts $a_i$ correspond to the number of points in every subset. In this paper, we prove that for any partition of $n$ where the parts satisfy $a_i\leq d+1$ for all $i=1,\ldots,r$, there exists a set $X\s\RR$ of $n$ points, such that every Tverberg partition of $X$ induces the same partition on $n$, given by the parts $\lil{a}{r}$.
\end{abstract}

\section{Introduction}

Tverberg's theorem \cite{tverberg} from 1966 asserts the following: For any two integers $d,r$ define $n=T(d,r)=(d+1)(r-1)+1$. Then every $X\s\RR^d$ of $n$ points can be partitioned into $r$ disjoint subsets, $\lil{X}{r}$, so that $\cap_{i=1}^r conv(X_i)\neq\emptyset$. Such a partition is known as a Tverberg partition of $X$, and the points in $\cap_{i=1}^r conv(X_i)$ are known as Tverberg points of $X$.

In this paper, we prove the following theorem:

\begin{thm}
Given integers $d,r$ and $n=T(d,r)=(d+1)(r-1)+1$, let $\lil{a}{r}$ be integers satisfying $a_i\le d+1$ and $\sum_{i=1}^r a_i=n$ . Then there exists a set $X\s\RR^d$ of $n$ points, such that for any $\lil{X}{r}$ that form a Tverberg partition of $X$, the cardinalities of $\lil{X}{r}$ are a permutation of the integers $\lil{a}{r}$.
\end{thm}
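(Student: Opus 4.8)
The plan is to translate the size condition into a \emph{deficiency} condition, recast the existence of Tverberg partitions of a fixed combinatorial type as a sign condition determined by the oriented matroid of the configuration, and then build the required point set by an explicit recursion. Writing $b_i=(d+1)-a_i\ge 0$ for the deficiency of the $i$-th part, the hypotheses $a_i\le d+1$ and $\sum_i a_i=n$ are equivalent to $b_i\ge 0$ and $\sum_{i=1}^r b_i=r(d+1)-n=d$. So the data amount to a distribution of a total deficiency of exactly $d$ among the $r$ parts, and we must produce $X$ whose every Tverberg partition realizes this distribution.

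First I would record a linear-algebraic criterion for \emph{which} size-types can occur. Fix an assignment of the $n$ points to labeled parts $\lil{X}{r}$ and lift to $\hat v_k=(v_k,1)\in\RR^{d+1}$. A common point for this type exists exactly when the homogeneous system requiring $\sum_{k\in X_1}\lambda_k\hat v_k=\sum_{k\in X_i}\lambda_k\hat v_k$ for $i=2,\dots,r$ --- which is $(r-1)(d+1)=n-1$ scalar equations in the $n$ unknowns $\lambda_k$ --- admits a solution that can be scaled to be entrywise nonnegative (each part then contributing the same positive total). For points in general position the solution space is one-dimensional, and by Cramer's rule its generator has entries that are signed ratios of $(d+1)\times(d+1)$ minors of the $\hat v_k$. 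Hence a type is realizable iff this generator is sign-definite, so the set of realizable size-types is determined purely by the signs of those minors, i.e.\ by the oriented matroid of $X$. It therefore suffices to exhibit points for which every sign-consistent type has part sizes $\lil{a}{r}$.

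For the construction I would induct on $r$. Whenever some $b_i=0$ --- which happens automatically once $r>d$, since at most $d$ of the deficiencies are positive --- I peel off that zero-deficiency part: take a configuration $X'\s\RR^d$ of $n-(d+1)=T(d,r-1)$ points that is rigid for the remaining sizes, rescale it into a tiny ball $B$, and surround it by the vertices $N=\{w_0,\dots,w_d\}$ of a large generic simplex with $B$ deep in its interior, setting $X=X'\cup N$. The easy direction is immediate: any Tverberg partition of $X'$ together with the single block $N$ is a Tverberg partition of $X$, since the common point lies in $B\s\mathrm{conv}(N)$, realizing sizes $(d+1,a_2,\dots,a_r)$. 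The substance is the converse --- that \emph{every} Tverberg partition of $X$ has these sizes --- for which the key claim is that all vertices of $N$ must fall in one part. The common point $q$ is forced toward $B$ (a part avoiding $N$ lies in $B$); and a part consisting of a proper nonempty subset of $N$ together with core points $S_i\s X'$ has convex hull a thin cone from far vertices over $\mathrm{conv}(S_i)$, which for $B$ small enough meets a neighborhood of $B$ only near $\mathrm{conv}(S_i)$. Matching $q$ against all parts then forces the $S_i$ to solve a Tverberg problem for $X'$ with too few points, contradicting the rigidity of $X'$ unless $N$ is a single block; deleting $N$ recovers a Tverberg partition of $X'$, and induction closes the step.

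The recursion terminates at $r\le d$, where every part is deficient and no $(d+1)$-block can be peeled, so a direct construction is needed there; I would aim to realize arbitrary all-deficient distributions by choosing explicit coordinates (for the maximally balanced distribution the moment curve already forces the alternating, size-balanced type, and I expect a tailored perturbation to hit the general case). The hard part throughout will be the forcing direction --- proving that no extraneous size-type is sign-consistent. This is delicate precisely in the low-$r$, all-deficient regime and at the boundary $r=d+1$, where the crude pigeonhole ``some part avoids $N$'' fails and each part may contain exactly one far vertex; there one must replace it by the quantitative ``cone $\approx\mathrm{conv}(S_i)$'' estimate above, or argue directly on the oriented matroid, to rule out the remaining distributions. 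Establishing this robustly, together with the direct base-case construction, is where I expect the real work to lie.
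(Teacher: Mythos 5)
Your deficiency bookkeeping $b_i=(d+1)-a_i$, $\sum_i b_i=d$, is exactly the combinatorial pivot the paper uses (its equation (1)), so you have located the right invariant; but the proof itself is a program whose two hardest pieces are missing, and both gaps are real. First, the base case: your induction only peels off parts of size exactly $d+1$, so every instance bottoms out at some $r'\le d$ with \emph{all} parts deficient and total deficiency $d$ --- which is the general problem in its most constrained regime, and there you have no construction at all (the moment-curve remark addresses at most one balanced distribution, and ``a tailored perturbation'' is a hope, not an argument). Worse, the inductive step is unavoidably invoked at $r=d+1$ (where some $b_i=0$ automatically), and precisely there the pigeonhole ``some part avoids $N$'' fails since $|N|=d+1=r$; so even granting the base case, the chain of reductions does not close.

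Second, the forcing estimate in the inductive step is quantitatively wrong at the scale that matters. Let $B$ have radius $\epsilon$ and $N$ scale $R\gg\epsilon$, and let a part be $W_i\cup S_i$ with $\emptyset\neq W_i\subsetneq N$ and $S_i\subset X'$. Writing $q=\mu w+(1-\mu)s$ with $w\in\mathrm{conv}(W_i)$, $s\in\mathrm{conv}(S_i)$ indeed forces $\mu=O(\epsilon/R)$, but the displacement $q-s=\mu(w-s)$ still has norm up to order $\epsilon$ --- comparable to the diameter of $\mathrm{conv}(X')$ itself. Shrinking $B$ rescales both sides and gains nothing: relative to the geometry of $X'$, $\mathrm{conv}(W_i\cup S_i)\cap B$ is a fat cone over $\mathrm{conv}(S_i)$, not a thin neighborhood of it. So the cores $S_i$ satisfy only a cone condition, not $q\in\mathrm{conv}(S_i)$, and your ``too few points'' contradiction needs $X'$ to admit no $r$-part partition even in this relaxed cone sense --- a robust, scale-invariant rigidity (plus something like strong general position, since exact rigidity for $r-1$ parts says nothing about $r$-part partitions of $T(d,r-1)$ points) that is strictly stronger than the induction hypothesis you carry. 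The paper sidesteps all of this with a single explicit, degenerate configuration for every $d,r$ and every deficiency vector: $r-1$ points on each positive coordinate axis, plus $r$ anchor points whose zero pattern encodes the $b_i$ via a map $i(j)$; one then shows $0$ is the unique Tverberg point by counting the $r-1$ points on each side of each hyperplane $x_j=0$, and forces $|X_i|=a_i$ by a one-line sign argument in each coordinate followed by summing the resulting inequalities against $\sum_i a_i=n$. No induction, no general position, no stability --- the geometric mechanism that your outline defers to future work is the entire content of the paper's two-page argument.
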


\begin{rmk}
The requirement $a_i\leq d+1$ for all $i=1,\ldots,r$ is necessary:\\
Suppose $\lil{X}{r}$ is a Tverberg partition of $X$ with some Tverberg point $p$, and $|X_1|>d+1$ (without loss of generality). By Caratheodory's theorem, there exists $x\in X_1$ so that $p\in conv(X_1\bs\{x\})$. Therefore if we move the point $x$ from $X_1$ to the smallest subset among $X_2,\ldots,X_r$, we obtain a new Tverberg partition of $X$, which induces a different integer partition on $n$.
\end{rmk}

We will construct the family of sets $X$ that will be used in this proof, follow by proving some properties that apply to such sets, prove the theorem, and conclude by showing that the number of Tverberg partitions for these sets is always $[(r-1)!]^d$.

\section{Construction}
Assume $d,r,n \in\NN$ satisfy $n=(d+1)(r-1)+1$. We construct the set $X$ as a union of $d+1$ disjoint sets, $A,A_1,\ldots,A_d$, defined as follows:
Denote by $\lu{e}{d}$ the standard basis in $\RR^d$. For $1\le j\le d$, $$A_j=\{e^j,2e^j,\ldots,(r-1)e^j\}.$$

Let $\lil{a}{r}$ be integers as in Theorem 1. Note that:
$$\sum_{i=1}^r (d+1-a_i)=r(d+1)-\sum_{i=1}^r a_i=r(d+1)-n=d.$$
Therefore for each $1\le j\le d$ we can choose $1\le i(j)\le r$, so that  for all $1\le i\le r$, \be \#\{1\le j\le d: i(j)=i\}=d+1-a_i. \ee

For every $1\leq i\leq r$ and every $1\leq j\leq d$, define:
$$x^i_j=
\begin{cases} 0 & \text{if $i=i(j)$} \\
-i & \text{otherwise.}
\end{cases}$$
The set $A$ is now defined as $A=\{\lu{x}{r}\}$, where $x^i=(\lil{x^i}{d})$ for all $i=1,\ldots,r$.\\
Note that $X=A\cup A_1\cup\cdots\cup A_d$, where $A$ contains $r$ points and each $A_j$ contains $r-1$ points, therefore $|X|=r+d(r-1)=n$ as required.

\begin{rmk}
The specific values of the coordinates of the points in $X$ are not crucial. It can be shown that if we construct $X$ and then move all the points very slightly, the theorem can be proved with only slight modifications. Therefore it is always possible to find a set $X$ that satisfies the theorem and is also in general position or even strong general position \cite{doignon}, \cite{perles}. On the other hand, we could allow $X$ to be a multiset, and let $A_j$ consist of $r-1$ copies of the same point $e^j$ (without any change to the proof below).
\end{rmk}

\section{Proof of Theorem 1}
Throughout the proof, assume $X$ is a set constructed as above, $\lil{X}{r}$ is a Tverberg partition of $X$, and $p=(\lil{p}{d})\in\bigcap_{i=1}^r conv(X_i)$ is a Tverberg point of $X$.

\begin{lem} %Lemma 3.1
$p=0$. In other words, $0$ is the only Tverberg point of $X$.
\end{lem}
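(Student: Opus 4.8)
The plan is to establish $p_j=0$ for each coordinate $1\le j\le d$ independently; running over all $j$ this forces $p=0$. The first step is to record, for a fixed $j$, the sign of the $j$-th coordinate of every point of $X$. The points of $A_{j'}$ with $j'\neq j$ lie on a different axis and so have $j$-th coordinate $0$; among the points $x^i$ of $A$, the coordinate $x^i_j$ equals $0$ exactly when $i=i(j)$ and equals $-i<0$ otherwise. Hence the only points of $X$ with \emph{strictly positive} $j$-th coordinate are the $r-1$ points of $A_j=\{e^j,2e^j,\ldots,(r-1)e^j\}$, and the only points with \emph{strictly negative} $j$-th coordinate are the $r-1$ points $\{x^i:i\neq i(j)\}\s A$.

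With this sign pattern in hand I would run two pigeonhole arguments. Because $A_j$ contains exactly $r-1$ points while the Tverberg partition has $r$ parts, at least one part $X_i$ contains no point of $A_j$; every point of that $X_i$ then has $j$-th coordinate $\le 0$, and since $p\in conv(X_i)$ is a convex combination of those points we get $p_j\le 0$. Symmetrically, the collection of points with negative $j$-th coordinate also has exactly $r-1$ elements, so some part $X_{i'}$ avoids all of them; every point of $X_{i'}$ has $j$-th coordinate $\ge 0$, and the same convexity argument gives $p_j\ge 0$. Together these yield $p_j=0$.

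The content of the argument is really the cardinality bookkeeping: in each direction $j$ both the positive-coordinate points (all of $A_j$) and the negative-coordinate points (all of $A$ except $x^{i(j)}$) number exactly $r-1$, which is one less than the number $r$ of parts --- precisely the slack each pigeonhole step needs. The main thing to get right is therefore the sign computation for the points of $A$, namely that $x^i_j$ is genuinely negative when $i\neq i(j)$ (using $i\ge 1$) and vanishes for the single index $i=i(j)$; note that only the value $i(j)$, and not the full counting property imposed on the map $i(\cdot)$, is needed here. Once $p_j=0$ has been shown for every $j$, the conclusion $p=0$ is immediate.
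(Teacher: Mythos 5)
Your proposal is correct and follows essentially the same argument as the paper: for each coordinate $j$, both the set of points with positive $j$-th coordinate (namely $A_j$) and the set with negative $j$-th coordinate (namely $\{x^i : i\neq i(j)\}$) have exactly $r-1$ elements, so by pigeonhole some part of the partition avoids each, forcing $p_j\le 0$ and $p_j\ge 0$. Your write-up is in fact slightly more explicit than the paper's, since you verify the sign pattern of all points of $X$ coordinate by coordinate rather than asserting it.
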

\begin{proof}
Choose any $1\le j\le d$. There are exactly $r-1$ points $x\in X$ satisfying $x_j>0$ (these are the points in $A_j$). Therefore for some $1\le i\le r$, $X_i$ is contained in the closed half-space $\{x\in \RR^d:x_j\le 0\}$. Since $p\in conv(X_i)$, we must have $p_j\le 0$.

On the other hand, there are exactly $r-1$ points $x\in X$ satisfying $x_j<0$ (these are the points $x^i\in A$, for $i\neq i(j)$). Thus by a similar argument, $p_j\ge 0$. Therefore $p_j=0$ for all $1\le j\le d$.
\end{proof}

\begin{cor} %Corollary 3.2
For all $1\le i\le r$, $X_i\cap A$ contains exactly one point.
\end{cor}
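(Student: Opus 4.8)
The plan is to combine the preceding lemma with a simple counting argument. Since the lemma identifies $p=0$ as the unique Tverberg point, we have $0\in conv(X_i)$ for every $1\le i\le r$; in particular every block $X_i$ is nonempty, as the origin cannot lie in the convex hull of the empty set. Because $A$ consists of exactly $r$ points and the blocks $\lil{X}{r}$ partition $X$ into $r$ disjoint pieces, it suffices to prove that each block meets $A$: then the $r$ points of $A$ are distributed among the $r$ blocks with at least one apiece, so each block meets $A$ in exactly one point by pigeonhole.

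The heart of the matter is therefore to show $X_i\cap A\neq\emptyset$ for each $i$. The key observation is that every point of $A_1\cup\cdots\cup A_d$ is a \emph{nonzero} vector with all coordinates nonnegative, since each such point has the form $ke^j$ with $1\le k\le r-1$. Suppose toward a contradiction that some block avoids $A$, so that $X_i\s A_1\cup\cdots\cup A_d$. Writing $0\in conv(X_i)$ as a convex combination $\sum_\ell\lambda_\ell v_\ell=0$ with $\lambda_\ell\ge 0$, $\sum_\ell\lambda_\ell=1$, and each $v_\ell\in X_i$, and reading off the $m$-th coordinate, we get $\sum_\ell\lambda_\ell(v_\ell)_m=0$, a sum of nonnegative terms. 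Hence $\lambda_\ell(v_\ell)_m=0$ for all $\ell$ and $m$; since no $v_\ell$ is the zero vector, every $\lambda_\ell$ must vanish, contradicting $\sum_\ell\lambda_\ell=1$. Thus each $X_i$ contains a point of $A$.

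The counting step then closes the argument at once. The only subtlety is to confirm that $A$ genuinely has $r$ distinct points, so that the pigeonhole count is valid: two of the $x^i$ could coincide only if both equalled the origin, and $x^i=0$ forces $i(j)=i$ for all $d$ indices $j$, i.e.\ $d+1-a_i=d$; two distinct indices cannot each consume all $d$ values of $j$, so at most one $x^i$ is the origin and the $\lu{x}{r}$ are distinct. I expect the coordinate-wise nonnegativity argument to be the real, though modest, obstacle, since everything else is bookkeeping.
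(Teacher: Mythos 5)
Your proof is correct and follows essentially the same route as the paper: pigeonhole on the $r$ points of $A$ among the $r$ blocks, reduced to showing no block can lie entirely in $A_1\cup\cdots\cup A_d$ because $0\notin conv(A_1\cup\cdots\cup A_d)$. The paper certifies this last fact with the single linear functional $x\mapsto\sum_{j=1}^d x_j$, which is strictly positive on $\cup_j A_j$, while you argue coordinate-wise in a convex combination (and add a careful, correct check that the $x^i$ are distinct, which the paper takes for granted) --- a cosmetic difference only.
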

\begin{proof}
Otherwise, there is some $1\le i\le r$ such that $X_i\cap A=\emptyset$ (since $A$ contains $r$ points). This implies that $p\in conv(X_i)\s conv(\cup_{j=1}^d A_j)$. Since every point $x\in\cup_{j=1}^d A_j$ satisfies $\sum_{j=1}^d x_j>0$, $p$ must also satisfy $\sum_{j=1}^d p_j>0$, but this contradicts Lemma 3.1.
\end{proof}

\begin{cor} %Corollary 3.3
Let $\lil{X}{r}$ be some Tverberg partition of $X$. We may always assume that $x^i\in X_i$ for all $1\le i\le r$ (otherwise we use Corollary 3.2 and renumber the sets of the partition). Then for every $1\le i\le r$ and every $1\le j\le d$:
$$|X_i\cap A_j|=\begin{cases} 0 & \text{if $i=i(j)$} \\1 & \text{otherwise.} \end{cases}$$
In particular, $|X_i|=a_i$, and $X$ satisfies the requirements of Theorem 1.
\end{cor}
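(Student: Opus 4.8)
The plan is to pin down the cardinalities $|X_i|$ by first establishing the one-sided bound $|X_i|\ge a_i$ for every $i$, and then invoking the identity $\sum_i|X_i|=|X|=n=\sum_i a_i$, which forces equality throughout. Once $|X_i|=a_i$ is known, the exact distribution of the points of each $A_j$ drops out of the very same inequalities by inspecting when they are tight. Throughout, write $\lambda^i_x\ge 0$ for convex weights realizing $0=\sum_{x\in X_i}\lambda^i_x x$ (recall $p=0$ by Lemma 3.1), normalized so that $\sum_x\lambda^i_x=1$, and recall that by Corollary 3.2 together with the renumbering convention we have $X_i\cap A=\{x^i\}$, so every point of $X_i$ other than $x^i$ lies in some $A_j$.

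The heart of the argument is a per-coordinate cancellation observation. Fix $i$ and suppose first that $x^i\ne 0$, i.e. $a_i\ge 2$. I first claim $\lambda^i_{x^i}>0$: otherwise $0$ would be a convex combination of points lying in $\cup_j A_j$, but each such point $y$ satisfies $\sum_j y_j>0$, so the combination would have strictly positive coordinate sum, a contradiction. Now fix any $j$ with $i\ne i(j)$. The only points of $X$ with negative $j$-th coordinate are the $x^{i'}$ with $i'\ne i(j)$, and $X_i$ contains only $x^i$ from $A$; meanwhile the points of $A_j$ are exactly the points of $X$ with positive $j$-th coordinate. Reading off the $j$-th coordinate of $0=\sum_{x\in X_i}\lambda^i_x x$ gives $\sum_{y\in X_i\cap A_j}\lambda^i_y\,y_j=i\,\lambda^i_{x^i}>0$, so $X_i\cap A_j\ne\emptyset$. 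Since there are exactly $a_i-1$ indices $j$ with $i\ne i(j)$, this yields $|X_i|\ge 1+(a_i-1)=a_i$. In the remaining case $a_i=1$ the bound $|X_i|\ge1=a_i$ is immediate because $x^i\in X_i$.

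Summing the bounds, $\sum_i a_i=n=|X|=\sum_i|X_i|\ge\sum_i a_i$, so every inequality $|X_i|\ge a_i$ must be an equality. Tracking which steps were slack then forces the stated distribution: for $a_i\ge 2$, equality in $|X_i|=1+\sum_j|X_i\cap A_j|$ together with $|X_i\cap A_j|\ge 1$ for the $a_i-1$ indices $j$ with $i\ne i(j)$ forces $|X_i\cap A_j|=1$ there and $|X_i\cap A_j|=0$ for the indices $j$ with $i=i(j)$; when $a_i=1$ one has $i=i(j)$ for all $j$, and $|X_i|=1$ forces $|X_i\cap A_j|=0$ throughout. In every case $|X_i\cap A_j|$ equals $0$ exactly when $i=i(j)$ and $1$ otherwise, and summing over $j$ recovers $|X_i|=1+\#\{j:i\ne i(j)\}=1+d-(d+1-a_i)=a_i$.

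I expect the main obstacle to be the temptation to read off the distribution of the $A_j$ from convexity of each $X_i$ in isolation. This fails: a set $X_i$ could in principle contain a point of $A_j$ that simply receives weight $0$ in the combination, so local convexity alone does not forbid ``extra'' points. The decisive step is therefore the \emph{global} cardinality count, which upgrades the one-sided bounds $|X_i|\ge a_i$ into exact equalities; the only genuinely analytic input is the positivity $\lambda^i_{x^i}>0$, which is what guarantees that the negative $j$-th coordinate of $x^i$ is actually cancelled by a bona fide point of $A_j$ rather than left unused.
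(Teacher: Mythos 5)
Your proposal is correct and follows essentially the same route as the paper: the positivity of the weight on $x^i$ (justified, as in the paper, by $0\notin conv(X\bs A)$ via the positive coordinate sum of points in $\cup_j A_j$) gives $X_i\cap A_j\neq\emptyset$ whenever $i\neq i(j)$, and the global count $\sum_i|X_i|=n=\sum_i a_i$ upgrades the bounds $|X_i|\ge a_i$ to equalities, exactly as in the paper's proof. Your explicit handling of the degenerate case $a_i=1$ (where $x^i=0$ and $X_i\bs A$ may be empty) is a minor refinement of a detail the paper glosses over, but it does not change the argument.
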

\begin{proof}
Fix $1\le i\le r$ and $1\le j\le d$. From Lemma 3.1, we deduce that $0\in conv(X_i)$. Since $x^i$ is the only point in $X_i\cap A$, there is some $\lambda\in (0,1]$ and some $y^i\in conv(X_i\bs A)$, satisfying $$\lambda x^i+(1-\lambda)y^i=0.$$
Note that $\lambda$ cannot be $0$, because $0\notin conv(X\bs A)$.  If $i\neq i(j)$, we have $x^i_j<0$. Looking at the j-th coordinate of the above equation, we deduce that $y^i_j>0$. This is only possible if $X_i\cap A_j$ is non-empty (as only points from $A_j$ have positive values in the j-th coordinate). Thus
\be |X_i\cap A_j|\ge\begin{cases} 0 & \text{if $i=i(j)$} \\1 & \text{otherwise.}\end{cases} \ee
Since $A,\lil{A}{d}$ is a partition of $X$, we obtain (using eq. (1)):
\be |X_i|=1+\sum_{j=1}^d |X_i\cap A_j| \ge 1+(d-\#\{1\le j\le d:i(j)=i\})=a_i. \ee
However, $\lil{X}{r}$ is also a partition of $X$, thus $n=|X|=\sum_{i=1}^r |X_i|\ge \sum_{i=1}^r a_i=n$. This leads us to conclude all the inequalities in (2) and (3) are in fact equalities.
\end{proof}

\subsection*{The number of Tverberg partitions of $X$}

Sierksma conjectured that any set $X\s\RR^d$ of $n=T(d,r)$ points has at least $[(r-1)!]^d$ different Tverberg partitions (for known lower bounds, see \cite{vuvcic} and \cite{hell}). In the above construction, we can account for all Tverberg partitions of $X$, and their number is exactly $[(r-1)!]^d$. We state and prove this as the following proposition:

\begin{prop}
Let $d,r,n$ and $X$ be as above. Then $X$ has exactly $[(r-1)!]^d$ different Tverberg partitions.
\end{prop}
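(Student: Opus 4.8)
The plan is to set up an explicit bijection between the Tverberg partitions of $X$ and the $d$-tuples $(\si_1,\ldots,\si_d)$ of bijections, where each $\bij{j}$ records how the $r-1$ points of $A_j$ are distributed among the blocks. Since there are $(r-1)!$ choices for each $\si_j$ and the choices for distinct coordinates are independent, such a bijection immediately delivers the count $[(r-1)!]^d$.

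First I would pin down the canonical labeling coming from Corollary 3.2: every block of a Tverberg partition meets $A$ in exactly one point, and because the points $\lu{x}{r}$ are distinct, labeling the blocks so that $x^i\in X_i$ is forced by the unordered partition itself. Thus there is no ambiguity from permuting blocks, and an unordered Tverberg partition carries exactly the same information as a labeled one satisfying $x^i\in X_i$. This is the step that makes the final count \emph{exact} rather than merely an upper or lower bound, since it rules out any overcounting from relabeling.

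With the labeling fixed, Corollary 3.3 shows that each such partition determines, for every coordinate $j$, a bijection $\bij{j}$: the point $\si_j^{-1}(i)\in A_j$ is the unique point of $A_j$ lying in $X_i$, while $X_{i(j)}$ receives none. The resulting map from Tverberg partitions to tuples $(\si_1,\ldots,\si_d)$ is injective, because the tuple records, for each point of $X\bs A$, which block contains it. It then remains to prove surjectivity: that every tuple $(\si_1,\ldots,\si_d)$ of bijections defines blocks $\lil{X}{r}$ that genuinely form a Tverberg partition.

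This realizability check is the substantive step, and the one I expect to be the main obstacle. For each $i$ I must exhibit $0\in conv(X_i)$. Writing $J_i=\{j: i(j)\neq i\}$ and letting $c_j e^j$ denote the point of $A_j$ assigned to $X_i$ (so $c_j\in\{1,\ldots,r-1\}$), I would solve $\lambda x^i+\sum_{j\in J_i}\mu_j c_j e^j=0$ coordinate by coordinate. On the coordinates in $J_i$ this forces $\mu_j=\lambda i/c_j$, whereas on coordinates outside $J_i$ both $x^i$ and every point assigned to $X_i$ vanish, imposing no constraint; normalizing $\lambda+\sum_{j\in J_i}\mu_j=1$ then pins down a value $\lambda>0$ with all $\mu_j>0$. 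Hence $0$ is a genuine convex combination of the points of $X_i$ for each $i$, so $0\in\bigcap_{i=1}^r conv(X_i)$ and the tuple does yield a Tverberg partition. Combining injectivity, surjectivity, and the $(r-1)!$ independent choices per coordinate gives exactly $[(r-1)!]^d$ Tverberg partitions.
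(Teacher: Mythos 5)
Your proof is correct and follows essentially the same route as the paper: both establish a bijection between Tverberg partitions (labeled so that $x^i\in X_i$, justified by Corollary 3.2) and $d$-tuples of bijections $\sigma_j:A_j\ra\sn{r}\bs\{i(j)\}$, yielding the count $[(r-1)!]^d$. Your only addition is to carry out explicitly the coordinate-by-coordinate computation showing $0\in conv(X_i)$, which the paper leaves as ``easy to verify,'' and your computation is accurate.
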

\begin{proof}
Assume $\lil{X}{r}$ is a Tverberg partition of $X$, such that $x^i\in X_i$ for all $1\le i\le r$ (we may always assume that $x^i\in X_i$, due to Corollary 3.2). For each $1\le j\le d$, define a bijection $\bij{j}$, satisfying $x\in X_{\si_j(x)}$ for all $x\in A_j$ ($\si_j$ is a bijection due to Corollary 3.3). Note that any Tverberg partition $\lil{X}{r}$ is uniquely determined by the $d$ bijections $\lil{\si}{d}$.

On the other hand, for any $d$ bijections $\bij{j}$, define for each $1\le i\le r$: $$X_i=\{x^i\}\cup\{\si_j^{-1}(i)\;:\;1\le j\le d, i\neq i(j)\}.$$ It is easy to verify that $0\in conv(X_i)$ for all $1\le i\le r$, therefore $\lil{X}{r}$ is a Tverberg partition of $X$, which is uniquely determined by the bijections $\lil{\si}{d}$. This shows that the number of Tverberg partitions of the set $X$ is equal to the number of unique choices of bijections $\bij{j}$ for $1\le j\le d$. For each $1\le j\le d$, $A_j$ contains $r-1$ points, so there are $(r-1)!$ possible choices for $\si_j$, and a total of $[(r-1)!]^d$ unique Tverberg partitions of $X$.
\end{proof}

\section*{Acknowledgments}
I am very grateful to my advisor Gil Kalai, for his guidance and support.\quad I would also like to thank Imre B\'{a}r\'{a}ny for a fruitful discussion.

\bibliographystyle{amsalpha}

\end{document}